\documentclass{article}

\usepackage{amsmath,amssymb,latexsym,graphics,epsfig}
\usepackage{color}
\usepackage{amsthm}
\usepackage{graphicx,url}
\usepackage{hyperref}

\newtheorem{thm}{Theorem}[section]
\newtheorem{prp}[thm]{Proposition}
\newtheorem{lem}[thm]{Lemma}

\def\t{{\!\top\!}}
\def\0{{\bf 0}}
\def\1{{\bf 1}}
\def\F2{\mathbb{F}_{2}}

\begin{document}

\title
{\bf Switched symplectic graphs \\ and their 2-ranks}
\author
{Aida Abiad and Willem H. Haemers
\\
{\it\small Department of Econometrics and Operations Research,}
\\
{\it\small Tilburg University, Tilburg, The Netherlands}\\
{\small {\tt
A.AbiadMonge@uvt.nl, Haemers@uvt.nl}} \\
}
\date{}
\maketitle

\begin{abstract}
\noindent
We apply Godsil-McKay switching to the symplectic graphs over $\mathbb{F}_2$ with at least 63 vertices and prove that
the 2-rank of (the adjacency matrix of) the graph increases after switching. This shows that the switched graph
is a new strongly regular graph with parameters $(2^{2\nu}\!-1, 2^{2\nu-1}, 2^{2\nu-2},2^{2\nu-2})$ and
2-rank $2\nu+2$ when $\nu\geq 3$.
For the symplectic graph on $63$ vertices we investigate repeated switching by computer and find many new
strongly regular graphs with the above parameters for $\nu=3$ with various 2-ranks.
Using these results and a recursive construction method for the symplectic graph from Hadamard matrices,
we obtain several graphs with the above parameters, but different 2-ranks for every $\nu\geq 3$.
\\[7pt]
{Keywords:} strongly regular graph, symplectic graphs, switching, 2-rank, Hadamard matrix.\\
{MSC Codes:} 05E30, 05B20, 05C50.
\end{abstract}

\section{Introduction}
A Godsil-McKay switching set of a graph is a special type of subset of the vertices, that makes it possible
to switch some of the edges such that the spectrum of the adjacency matrix doesn't change.
It is well-known that if a graph $G'$ has the same spectrum as a strongly regular graph $G$,
then $G'$ is also strongly regular with the same parameters as $G$ (see for example~\cite{BH}).
Therefore Godsil-McKay switching provides a tool to construct new strongly regular graphs from known ones.
However, there is no guarantee that the switched graph is non-isomorphic with the original graph.
In this paper we use the 2-rank of the adjacency matrix to prove non-isomorphism after switching.

For $\nu\geq 2$, the symplectic graph over $\F2$, denoted by $Sp(2\nu,2)$ is a strongly regular with
parameters $(2^{2\nu}\!-1, 2^{2\nu-1}, 2^{2\nu-2},2^{2\nu-2})$.
The 2-rank of the adjacency matrix of $Sp(2\nu,2)$ equals $2\nu$, which is the smallest possible value.
The symplectic graph is characterized by Peeters~\cite{P1995} as follows.
\begin{thm}\label{Peeters}
The symplectic graph $Sp(2\nu,2)$ is uniquely determined by its parameters and its $2$-rank.
\end{thm}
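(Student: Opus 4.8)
The plan is to recover the symplectic form directly from a rank factorization of the adjacency matrix over $\F2$. Let $G$ be a strongly regular graph with parameters $(2^{2\nu}\!-1,2^{2\nu-1},2^{2\nu-2},2^{2\nu-2})$ and 2-rank $2\nu$, write $n=2^{2\nu}-1$, and let $A$ be its adjacency matrix; the goal is to build an isomorphism $G\to Sp(2\nu,2)$. First I would fix a rank factorization of $A$ over $\F2$ of the special shape $A=P^\t S P$, where $P$ is a $2\nu\times n$ matrix of rank $2\nu$ and $S$ is a $2\nu\times 2\nu$ matrix. This is possible because a symmetric matrix has equal row and column space: in a rank factorization $A=BP$ one may take the rows of $P$ to be a basis of the row space of $A$ over $\F2$, and then colspace$(B)=$ colspace$(A)=$ colspace$(P^\t)$ forces every column of $B$ to lie in $\{P^\t x\}$, so $B=P^\t S$ for some $S$. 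Since $\mathrm{rank}_2 A=2\nu=\mathrm{rank}_2 P$, the matrix $S$ is invertible; and comparing $A$ with $A^\t=P^\t S^\t P$ and cancelling the full-rank factors $P^\t$ (on the left) and $P$ (on the right) gives $S=S^\t$.

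Next I would promote $S$ to a \emph{nondegenerate alternating} form. It is already invertible. For the zero diagonal, use the $\F2$-specific fact that for a symmetric matrix $S$ the map $x\mapsto x^\t S x$ is \emph{linear}, since every off-diagonal term $S_{ij}x_ix_j$ occurs twice and cancels; explicitly $x^\t S x=\sum_i S_{ii}x_i$. As $G$ has no loops we get $0=A_{vv}=p_v^\t S p_v$ for every column $p_v$ of $P$, so this linear form vanishes on all the $p_v$. Since those columns span $\F2^{2\nu}$ (they have rank $2\nu$), all $S_{ii}$ vanish, so $S$ is symmetric with zero diagonal, i.e.\ a nondegenerate alternating bilinear form on $\F2^{2\nu}$.

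Finally I would identify the vertices with the nonzero vectors. The $n\times 2\nu$ matrix $P^\t S$ has rank $2\nu$, hence is injective as a map $\F2^{2\nu}\to\F2^n$, and it sends $p_v$ to the $v$-th column of $A=P^\t S P$, which by symmetry equals the $v$-th row $A_v$. Thus distinct (resp.\ nonzero) rows of $A$ force distinct (resp.\ nonzero) columns $p_v$. But the rows of $A$ are nonzero (each has weight $k=2^{2\nu-1}>0$) and pairwise distinct: if $v\sim w$ then $A_{vw}=1\ne 0=A_{ww}$ so $A_v\ne A_w$, while two non-adjacent vertices with equal rows would have $k$ common neighbours instead of $\mu=2^{2\nu-2}<k$. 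Hence $\{p_v:v\in V(G)\}$ is a set of $n=2^{2\nu}-1$ distinct nonzero vectors, i.e.\ it is all of $\F2^{2\nu}\setminus\{\0\}$, and $A_{vw}=p_v^\t S p_w$ for all $v,w$. Since any two nondegenerate alternating forms over $\F2$ are equivalent, choosing $U\in GL(2\nu,\F2)$ with $U^\t S U$ the standard symplectic form turns $v\mapsto U^{-1}p_v$ into a graph isomorphism $G\to Sp(2\nu,2)$.

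The computations here are light; the only real subtlety is the opening move — noticing that symmetry of $A$ lets one choose a rank factorization of the sandwich shape $P^\t S P$ with $S$ invertible and symmetric — together with the $\F2$-phenomenon that vanishing of the diagonal of $A$ on a spanning set of columns is precisely what promotes $S$ from symmetric to alternating. (One could first observe that $A^2=2^{2\nu-2}(I+J)\equiv 0\pmod 2$ for $\nu\ge 2$, so the row space of $A$ over $\F2$ is self-orthogonal, but the argument above does not need this.)
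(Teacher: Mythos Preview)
The paper does not prove this statement; it is quoted as a result of Peeters~\cite{P1995}, so there is no in-paper proof to compare against. Your argument is correct and is in fact the natural one (and close in spirit to Peeters' original): factor $A=P^\t SP$ over $\F2$ using that a symmetric matrix has equal row and column spaces, use the zero diagonal of $A$ together with the $\F2$-linearity of $x\mapsto x^\t Sx$ to make $S$ alternating and nondegenerate, and then count to see that the columns of $P$ exhaust $\F2^{2\nu}\setminus\{\0\}$. All the steps check out, including the cancellation $P^\t SP=P^\t S^\t P\Rightarrow S=S^\t$ (valid because $P$ has a right inverse and $P^\t$ a left inverse over $\F2$), the injectivity of $P^\t S$, and the distinctness of rows of $A$ via the parameters $k>\mu$.
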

When $\nu=2$ we have the complete graph $K_3$, and $Sp(4,2)$ is a strongly regular graph with parameters
$(15,8,4,4)$, which is known to be determined by its parameters.
For $\nu\geq 3$ we find Godsil-McKay switching sets in $Sp(2\nu,2)$ and prove that the 2-rank increases
after switching, which implies that the switched graph is non-isomorphic with the original graph.

It turns out that for $\nu\geq 3$ the symplectic graph has many switching sets that remain switching sets
after switching.
Therefore it is interesting to find out what happens after several switchings.
We investigated this by computer for $Sp(6,2)$ and found 1826 new strongly regular graphs with parameters
$(63,32,16,16)$.
For the 2-rank of these new graphs we found six different values.

The symplectic graphs will be defined below, and in Section~\ref{Hadamard} we give an alternative
description by use of a well-known recursive construction of Hadamard matrices.
We settle the behavior of the 2-ranks of this recursive construction, and apply it to the
strongly regular graphs with various 2-ranks found by computer.
As a result we find that for every $\nu\geq 3$ there exist strongly regular graphs with the same parameters
as $Sp(2\nu,2)$ for a number of distinct values for the 2-rank.
Moreover, this number of different 2-ranks is nondecreasing and goes to infinity when $\nu\rightarrow\infty$.

\subsection{The symplectic graphs over $\F2$}
Let $\F2^{2\nu}$ be the $2\nu$-dimensional vector space over $\F2$, and let $K=I_{\nu}\otimes (J_2-I_2)$, where
$I_{\nu}$ is the identity matrix of order $\nu$, and $J$ denotes the all-ones matrix of order $2$.
The \emph{symplectic graph} $Sp(2\nu,2)$ over $\F2$ is the graph whose vertices are the nonzero vectors
of $\F2^{2\nu}$, where two vertices $x$ and $y$ are adjacent whenever $x^{\t}Ky = 1$.
Equivalently, $x=[x_1\ \ldots\ x_{2\nu}]^\t$ and $y=[y_1\ \ldots\ y_{2\nu}]^\t$ are adjacent if
\[ (x_{1}y_2 + x_2 y_{1})+(x_{3}y_4+x_{4}y_{3})+\cdots+(x_{2\nu-1}y_{2\nu}+x_{2\nu}y_{2\nu-1})=1.
\]
For $\nu\geq 2$, it is known (see for example \cite{P1995}) that the symplectic graph $Sp(2\nu,2)$ is a
strongly regular graph with parameters
\begin{equation*}
\left(2^{2\nu}-1,\ 2^{2\nu-1},\ 2^{2\nu-2},\ 2^{2\nu-2}\right).
\end{equation*}

\subsection{Godsil-McKay switching and its 2-rank behavior}
Godsil and McKay~\cite{GM1982} proved the following result.
\begin{thm}\label{GM}
Let $G$ be a graph and let $S$ be a subset of the vertex set of $G$ which induces a regular subgraph.
Assume that each vertex outside $S$ is adjacent to $|S|$, $\frac{1}{2}|S|$ or $0$ vertices of $S$.
Make a new graph $G'$ from $G$ as follows.
For each vertex $v$ outside $S$ with $\frac{1}{2}|S|$ neighbors in $S$,
delete the $\frac{1}{2}|S|$ edges between $v$ and $S$, and join $v$ instead to the $\frac{1}{2}|S|$
other vertices in $S$.
Then $G$ and $G'$ have the same spectrum.
\end{thm}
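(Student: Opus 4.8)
The plan is to produce an orthogonal matrix that conjugates the adjacency matrix of $G$ into the adjacency matrix of $G'$; since conjugation by an orthogonal matrix preserves the characteristic polynomial, this immediately gives equality of spectra. Write $n=|S|$ and order the vertices so that those in $S$ come first. Then the adjacency matrix of $G$ has the block form
\[
A=\begin{pmatrix} B & N\\ N^{\t} & C\end{pmatrix},
\]
where $B$ is the adjacency matrix of the induced (regular, say $k$-regular) subgraph on $S$, $C$ is the adjacency matrix of the induced subgraph on $V\setminus S$, and the columns of $N$ record the neighbourhoods in $S$ of the vertices outside $S$. By hypothesis every column of $N$ equals $\0$, the all-ones vector $\1$ of length $n$, or a $0/1$ vector with exactly $\frac12 n$ ones (so that $n$ is even whenever the middle case actually occurs; if it never occurs then $G'=G$ and there is nothing to prove).

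The candidate is
\[
Q=\begin{pmatrix} \frac{2}{n}J_n-I_n & 0\\ 0 & I\end{pmatrix}.
\]
First I would check that $Q$ is symmetric and satisfies $Q^2=I$; this reduces to $\bigl(\frac{2}{n}J_n-I_n\bigr)^2=\frac{4}{n^2}J_n^2-\frac{4}{n}J_n+I_n=I_n$, using $J_n^2=nJ_n$. Hence $Q$ is orthogonal and $A$ and $QAQ$ are similar, so they have the same spectrum.

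The core of the argument is then to expand $QAQ$ block by block and recognise it as the adjacency matrix of $G'$. For the $S$--$S$ block one obtains $\bigl(\frac{2}{n}J_n-I_n\bigr)B\bigl(\frac{2}{n}J_n-I_n\bigr)$; since $S$ induces a $k$-regular graph we have $BJ_n=J_nB=kJ_n$, and a short expansion collapses this expression back to $B$, so the subgraph on $S$ is unchanged. For the $S$--$(V\setminus S)$ block one obtains $\bigl(\frac{2}{n}J_n-I_n\bigr)N$, which acts column by column: a column $\0$ stays $\0$, a column $\1$ stays $\1$ because $\bigl(\frac{2}{n}J_n-I_n\bigr)\1=2\1-\1=\1$, and a column with exactly $\frac12 n$ ones is sent to $\1$ minus itself, i.e.\ to its complement inside $S$. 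This is exactly the edge replacement described in the theorem; the $(V\setminus S)$--$S$ block is its transpose, and the $(V\setminus S)$--$(V\setminus S)$ block is $C$, again unchanged. Therefore $QAQ$ is the adjacency matrix of $G'$, and $G$ and $G'$ have the same spectrum.

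What needs care here is bookkeeping rather than genuine difficulty: verifying that $Q$ fixes the $\0$ and $\1$ columns of $N$ (so that no spurious edges are created) and that $k$-regularity of the subgraph on $S$ is precisely what keeps the $S$--$S$ block invariant. The real \emph{idea}, as opposed to an obstacle, is simply spotting the matrix $\frac{2}{n}J_n-I_n$, the orthogonal reflection that complements exactly the half-full $0/1$ vectors; once it is written down the remaining verification is only a few lines.
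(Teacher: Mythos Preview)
Your argument is correct: the reflection $\frac{2}{n}J_n-I_n$ is orthogonal, fixes the $\0$ and $\1$ columns, complements the half-full columns, and leaves the regular block $B$ invariant, so $QAQ$ is precisely the adjacency matrix of $G'$. The paper itself gives no proof of this theorem---it is quoted as the result of Godsil and McKay~\cite{GM1982}---and what you have written is essentially their original orthogonal-similarity argument, so there is nothing to compare.
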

The operation that changes $G$ into $G'$ is called \emph{Godsil-McKay switching}.
The subset $S$ of the vertex set of $G$ will be called a \emph{(Godsil-McKay) switching set}.
Note that any vertex subset of $G$ of size 2 satisfies the required conditions,
but in this case the switched graph $G'$ is isomorphic with $G$.
Therefore we assume that a switching set has at least four vertices.

Let $A$ and $A'$ be the adjacency matrices of $G$ and $G'$, respectively,
and assume that the first $|S|$ rows (and columns) of $A$ and $A'$
correspond to the switching set $S$ and the last $h$ rows correspond to
the vertices outside $S$ with exactly $\frac{1}{2}|S|$ neighbors in $S$.
Then
\[
A'=A+K~(\mbox{mod }2),~~\mbox{where}~~
K=\left[\begin{array}{ccc}
O&O&J\\O&O&O\\\!J^\top\!&O&O
\end{array}\right] ,
\]
and $J$ is the $|S|\times h$ all-ones matrix.
Since 2-rank$(K)=2$, the 2-ranks of $A$ and $A'$ differ by at most $2$.
It is well-known that the 2-rank of any adjacency matrix is even (see~\cite{BE1992}),
thus we have the following result.
\begin{prp}\label{GMrank}
Suppose $2$-rank$(A)=r$, then $r$ is even and $2$-rank$(A')=r-2$, $r$, or $r+2$.
\end{prp}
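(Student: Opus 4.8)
The plan is to deduce the statement from two ingredients, both essentially recorded in the discussion preceding the proposition: the evenness of the $2$-rank of the adjacency matrix of any (loopless) graph, and the subadditivity of rank over a field. First I would invoke the evenness fact from~\cite{BE1992}: an adjacency matrix is symmetric and has zero diagonal, so over $\F2$ it is the matrix of an alternating bilinear form, and such a matrix is congruent over $\F2$ to a direct sum of copies of $\left[\begin{smallmatrix}0&1\\1&0\end{smallmatrix}\right]$ together with a zero block, hence has even rank. Applied to $A$ this gives that $r$ is even; applied to $A'$, which is again the adjacency matrix of a graph, it gives that $2$-rank$(A')$ is even as well.

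Next I would bound the difference of the two ranks using $A'=A+K\pmod 2$. Since the row space of $A+K$ is contained in the sum of the row spaces of $A$ and of $K$, we get $2$-rank$(A')\le 2$-rank$(A)+2$-rank$(K)$, and applying the same inequality to $A=A'+K\pmod 2$ yields the reverse estimate, so $|\,2\text{-rank}(A')-r\,|\le 2\text{-rank}(K)$. It then remains to observe that $2$-rank$(K)\le 2$: every row of $K$ lying in the block indexed by $S$ equals the $\{0,1\}$-vector supported exactly on the last $h$ coordinates, every row in the block indexed by the $h$ outside vertices equals the indicator vector of $S$, and all remaining rows are zero, so the row space of $K$ is spanned by at most two vectors.

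Combining the two ingredients finishes the proof: the rank bound gives $2$-rank$(A')\in\{r-2,r-1,r,r+1,r+2\}$, and since $2$-rank$(A')$ is even this forces $2$-rank$(A')\in\{r-2,r,r+2\}$.

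I do not expect a real obstacle here. The only point demanding care is the evenness claim, which genuinely fails for symmetric $\F2$-matrices with nonzero diagonal; it is used precisely through the fact that the graphs in question are loopless, equivalently that $K$ has zero diagonal so that $A'=A+K\pmod 2$ again has zero diagonal. Everything else is the elementary subadditivity of matrix rank.
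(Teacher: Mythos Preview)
Your argument is correct and follows essentially the same route as the paper: the discussion preceding the proposition already records that $A'=A+K\pmod 2$ with $2\text{-rank}(K)=2$, hence the ranks differ by at most $2$, and then invokes the evenness of the $2$-rank of an adjacency matrix from~\cite{BE1992}. You simply spell out the subadditivity in both directions and the reason behind the evenness fact, which the paper leaves implicit.
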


\section{Switched symplectic Graphs}

For $\nu\geq 3$, we define the following vectors in $\F2^{2\nu}$:
\begin{gather*}
v_1=\left[\begin{array}{c} 1 \\ 0 \\ 1 \\ 0 \\ 1 \\ 0 \\ z\end{array}\right],
v_2=\left[\begin{array}{c} 1 \\ 0 \\ 0 \\ 1 \\ 0 \\ 1 \\ z\end{array}\right],
v_3=\left[\begin{array}{c} 0 \\ 1 \\ 1 \\ 0 \\ 0 \\ 1 \\ z\end{array}\right],
v_4=\left[\begin{array}{c} 0 \\ 1 \\ 0 \\ 1 \\ 1 \\ 0 \\ z\end{array}\right],
\end{gather*}
where $z$ is an arbitrary vector in $\F2^{2\nu-6}$.
\begin{prp}\label{prp:ss}
The set $S=\{v_1,v_2,v_3,v_4\}$ is a Godsil-McKay switching set of $Sp(2\nu,2)$ for $\nu \geq 3$.
\end{prp}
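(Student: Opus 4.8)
The plan is to check the two hypotheses of Theorem~\ref{GM} directly: that $S$ induces a regular subgraph of $Sp(2\nu,2)$, and that each vertex outside $S$ has $0$, $2$ or $4$ neighbours in $S$ (note that $|S|=4$, and the four vectors are distinct nonzero vectors, as is clear already from their first six coordinates). The point that makes this painless is that $v_1,v_2,v_3,v_4$ all share the same tail $z$. Writing each vector as $[a_i^\t\ z^\t]^\t$ with $a_i\in\F2^6$, and writing $K$ in block-diagonal form with a $6\times6$ block $K_6=I_3\otimes(J_2-I_2)$ on the first six coordinates and a block $K'$ on the last $2\nu-6$, an arbitrary vector splits as $w=[b^\t\ u^\t]^\t$ with $b\in\F2^6$, $u\in\F2^{2\nu-6}$, and
\[
w^\t K v_i = b^\t K_6 a_i + u^\t K' z .
\]
Since the symplectic form is alternating we have $z^\t K' z=0$, so $v_i^\t K v_j=a_i^\t K_6 a_j$; and for a vertex $w$ outside $S$ the scalar $c_w:=u^\t K' z$ does not depend on $i$, so $w$ is adjacent to $v_i$ exactly when $b^\t K_6 a_i=1+c_w$. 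Hence the whole statement reduces to a computation with the six-dimensional vectors $a_1,a_2,a_3,a_4$, uniformly in $\nu$.

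For the first hypothesis I would just evaluate $a_i^\t K_6 a_j$ for $i<j$, using that $K_6$ pairs the coordinate positions $(1,2),(3,4),(5,6)$; a short calculation gives $a_i^\t K_6 a_j=0$ in every case, so the subgraph induced on $S$ has no edges and is in particular regular (which is all Theorem~\ref{GM} needs). A shortcut: one checks $a_4=a_1+a_2+a_3$, and then the three evaluations $a_1^\t K_6 a_2=a_1^\t K_6 a_3=a_2^\t K_6 a_3=0$, together with bilinearity and $x^\t K_6 x=0$, show that $S$ is contained in the totally isotropic subspace $\langle a_1,a_2,a_3\rangle$ of $(\F2^6,K_6)$, whence no two vertices of $S$ are adjacent.

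For the second hypothesis the key identity is $a_1+a_2+a_3+a_4=\0$ (checked coordinate by coordinate, or read off from $a_4=a_1+a_2+a_3$). It yields, for every $b\in\F2^6$,
\[
\sum_{i=1}^{4} b^\t K_6 a_i = b^\t K_6(a_1+a_2+a_3+a_4) = 0 \quad\text{in } \F2,
\]
so the number of $i$ with $b^\t K_6 a_i=1$ is even, and hence so is the number with $b^\t K_6 a_i=0$. Combined with the reduction of the first paragraph, a vertex $w=[b^\t\ u^\t]^\t$ outside $S$ is adjacent to precisely those $v_i$ with $b^\t K_6 a_i=1+c_w$, and for either value of $c_w$ this count is $0$, $2$ or $4$. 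With Theorem~\ref{GM} this proves that $S$ is a Godsil-McKay switching set.

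I do not expect a genuine obstacle: the proposition is ultimately a small finite check. The only place that needs attention is the bookkeeping in the reduction step --- verifying that the shared tail $z$ contributes the same amount $c_w$ to $w^\t K v_i$ for all four indices $i$ and contributes $0$ to $v_i^\t K v_j$ --- since this is exactly what decouples the problem from $\nu$. Once that is in place, regularity of the induced subgraph is a few tiny computations over $\F2$, and the neighbour-count condition is nothing more than the linear identity $a_1+a_2+a_3+a_4=\0$, with no case analysis over the $2^{2\nu}-1$ vertices.
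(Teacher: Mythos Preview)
Your proof is correct and follows essentially the same line as the paper's: both verify that $S$ is a coclique and then use the single identity $v_1+v_2+v_3+v_4=\0$ to conclude that any vertex has an even number of neighbours in $S$. The paper simply applies this identity directly in $\F2^{2\nu}$ via $x^\t K(v_1+v_2+v_3+v_4)=0$, so your block decomposition into the first six coordinates and the tail $z$ (and the constant $c_w$) is extra scaffolding that is not actually needed.
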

\begin{proof}
Any two vertices from $S$ are nonadjacent, so the subgraph of $Sp(2\nu,2)$
induced by $S$ is a coclique, and therefore regular.
Consider an arbitrary vertex $x\not\in S$.
Then
\[
x^{\t}Kv_1+x^{\t}Kv_2+x^{\t}Kv_3+x^{\t}Kv_4 = x^{\t}K(v_1+v_2+v_3+v_4) = x^\t K \0 = 0.
\]
This implies that the number of edges between $x$ and $S$ is even, and therefore $S$ is a switching set.
\end{proof}

Let $G'$ be the graph obtained from $G=Sp(2\nu,2)$ by switching
with respect to $S$. We shall now prove that $G$ and $G'$
are non-isomorphic.
\begin{thm}\label{switch}
For $\nu \geq 3$, the graph $G'$ obtained from $Sp(2\nu,2)$ by switching with respect to the switching set $S$
given above, is strongly regular with the same parameters as $Sp(2\nu,2)$, but with $2$-rank equal to $2\nu+2$.
\end{thm}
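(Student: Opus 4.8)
The plan is to work over $\F2$ throughout and to reduce the statement to a column-rank computation. First I would write the adjacency matrix of $G=Sp(2\nu,2)$ as $A=NKN^\t$, where $N$ is the $(2^{2\nu}-1)\times 2\nu$ matrix whose rows are the nonzero vectors of $\F2^{2\nu}$ (listed in the ordering used for $A$ and $A'$) and $K$ is the symplectic matrix from the definition of $Sp(2\nu,2)$; this identity holds with the correct zero diagonal because $x^\t Kx=0$ for every $x$. Let $T$ be the set of vertices having exactly $\frac12|S|=2$ neighbours in $S$, i.e.\ the vertices affected by the switching, and let $\chi_S,\chi_T\in\F2^{\,2^{2\nu}-1}$ denote their $0/1$ indicator column vectors. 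Then the correction matrix in the update rule preceding Proposition~\ref{GMrank} is precisely $\chi_S\chi_T^\t+\chi_T\chi_S^\t$, so $A'=BMB^\t\pmod 2$, where $B=[\,N\mid\chi_S\mid\chi_T\,]$ and $M$ is the $(2\nu+2)\times(2\nu+2)$ block-diagonal matrix with diagonal blocks $K$ and $\left[\begin{smallmatrix}0&1\\1&0\end{smallmatrix}\right]$; both blocks are invertible over $\F2$, hence so is $M$. Proposition~\ref{GMrank} already tells us $2$-rank$(A')\in\{2\nu-2,2\nu,2\nu+2\}$, and I will pin down the exact value.

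The key reduction is an elementary lemma: if $B$ has full column rank and $M$ is invertible, then $B$ is injective as a linear map, so $BMB^\t v=\0$ forces $MB^\t v=\0$ and then $B^\t v=\0$; thus $\ker(BMB^\t)=\ker(B^\t)$ and $2$-rank$(A')=2$-rank$(B)=2\nu+2$. Everything therefore comes down to showing that the $2\nu+2$ columns of $B$ are linearly independent over $\F2$, that is: $\operatorname{rank}(N)=2\nu$; $\chi_S\notin W$, where $W$ is the column space of $N$; and $\chi_T\notin W+\langle\chi_S\rangle$.

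The first two are quick. Since $N$ has the standard basis vectors among its rows, $\operatorname{rank}(N)=2\nu$ and $W=\{(w^\t x)_{x\ne\0}:w\in\F2^{2\nu}\}$; every nonzero vector of $W$ is the indicator of a set $\{x\ne\0:w^\t x=1\}$ of size exactly $2^{2\nu-1}$, whereas $|S|=4<2^{2\nu-1}$ for $\nu\ge 3$, so $\chi_S\notin W$. It follows that every vector of $W+\langle\chi_S\rangle$ is the indicator of a set whose cardinality lies in $\{0,\,4,\,2^{2\nu-1}-4,\,2^{2\nu-1}-2,\,2^{2\nu-1},\,2^{2\nu-1}+2,\,2^{2\nu-1}+4\}$ (add or delete $\chi_S$ from some $\chi_{\{x\ne\0:w^\t x=1\}}$ and use $|\{x:w^\t x=1\}\cap S|\in\{0,\dots,4\}$, together with the degenerate cases $w=\0$). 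So it only remains to compute $|T|$ and check that it avoids this list. Put $f_i(x)=x^\t Kv_i$; from $v_1+v_2+v_3+v_4=\0$ we get $f_4=f_1+f_2+f_3$, and since $K$ is invertible and $v_1,v_2,v_3$ are linearly independent (indeed $v_1+v_2+v_3=v_4\ne\0$ and the $v_i$ are distinct), $f_1,f_2,f_3$ are linearly independent functionals. A vertex $x$ has exactly two neighbours in $S$ iff the weight of $(f_1(x),f_2(x),f_3(x),f_1(x)+f_2(x)+f_3(x))$ equals $2$, i.e.\ iff $(f_1,f_2,f_3)(x)\notin\{\0,\1\}$; as $(f_1,f_2,f_3)$ maps onto $\F2^3$ with fibres of size $2^{2\nu-3}$, this happens for $6\cdot 2^{2\nu-3}$ nonzero vectors, none of which lies in $S$ (each $v_i$ satisfies $f_j(v_i)=v_i^\t Kv_j=0$). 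Hence $|T|=6\cdot 2^{2\nu-3}=3\cdot 2^{2\nu-2}$, which for $\nu\ge 3$ exceeds $2^{2\nu-1}+4$, the largest value in the list above. Therefore $\chi_T\notin W+\langle\chi_S\rangle$, $B$ has full column rank, and $2$-rank$(A')=2\nu+2$.

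To finish, Theorem~\ref{GM} gives that $A'$ and $A$ are cospectral, so $G'$ is strongly regular with the same parameters as $Sp(2\nu,2)$, while the change in $2$-rank (or Theorem~\ref{Peeters}) shows $G'\not\cong Sp(2\nu,2)$. The step I expect to be the real obstacle is the combinatorial bookkeeping that yields $|T|=3\cdot 2^{2\nu-2}$ and the verification that this value misses the short list of forbidden cardinalities — equivalently, getting the distribution of the $v_i$ and their neighbourhoods in $S$ exactly right — together with setting up the factorisation $A'=BMB^\t$ with the correct ordering of rows; given these, the linear-algebra lemma $\ker(BMB^\t)=\ker(B^\t)$ is immediate.
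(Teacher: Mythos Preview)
Your argument is correct, and it follows a genuinely different route from the paper's.

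The paper does not compute the $2$-rank of $A'$ directly. Instead, it exploits the special feature of $Sp(2\nu,2)$ that every nonzero element of the row space of $A$ is actually a row of $A$ (since $2\text{-rank}(A)=2\nu$ and $A$ has $2^{2\nu}-1$ rows). It then picks three concrete rows $r_5,r_6,r_7$ of $A$ with $r_7=r_5+r_6$, observes that after switching the sum $r_5'+r_6'+r_7'=11110\ldots0$ lies in the row space of $A'$ but is not a row of $A'$, and concludes $G'\not\cong G$. The value $2\nu+2$ is then read off by combining Proposition~\ref{GMrank}, Peeters' characterisation (Theorem~\ref{Peeters}), and the fact (stated in the introduction) that $2\nu$ is the minimal possible $2$-rank for these parameters.

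Your approach instead factorises $A'=BMB^\t$ over $\F2$ with $M$ invertible, reduces to showing $B=[\,N\mid\chi_S\mid\chi_T\,]$ has full column rank, and settles this by the support-size argument culminating in $|T|=3\cdot 2^{2\nu-2}>2^{2\nu-1}+4$. This is longer but more self-contained: you obtain $2\text{-rank}(A')=2\nu+2$ by a direct linear-algebra computation, without invoking Peeters' theorem or the minimality of $2\nu$ (non-isomorphism then follows immediately from the rank change). The factorisation $A'=BMB^\t$ and the lemma $\ker(BMB^\t)=\ker(B^\t)$ are a clean and reusable device. One small remark: since $v_1+v_2+v_3+v_4=\0$, the number $|S\cap\{x:w^\t x=1\}|$ is in fact always even, so your list of forbidden cardinalities could be trimmed to $\{0,4,2^{2\nu-1}-4,2^{2\nu-1},2^{2\nu-1}+4\}$; this does not affect the argument.
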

\begin{proof}
Let $A$ be the adjacency matrix of $G=Sp(2\nu,2)$,
and assume that the first four rows and columns correspond to $S$.
Then 2-rank$(A)=2\nu$ and $A$ has $2^{2\nu}-1$ rows.
This implies that, over $\F2$, every possible nonzero linear combination of a basis of the row space of $A$
is a row of $A$.
Therefore the sum (mod~2) of any two rows of $A$ is again a row of $A$.
Let $r_5$ and $r_6$ be rows of $A$ corresponding to the vertices
$v_5=[1 0 0 0 0 0 {z^{\!\top}}]^\top\!$ and $v_6=[0 0 1 0 0 0 {z^{\!\top}}]^\top\!$, respectively.
Then $r_1$ starts with $0011$ and $r_2$ starts with $0101$.
It follows that $r_7=r_5+r_6$ is also a row of $A$ starting with $0110$.
After switching only the first four entries of $r_5$, $r_6$ and $r_7$ change and become
$1100$, $1010$ and $1001$, respectively.
Let $r'_i$ denote the switched version of $r_i$ ($i=5, 6$ or 7).
Then $r'_5+r'_6+r'_7= 11110\ldots 0$, and this is in the row space of the switched matrix $A'$, but it is not a row of $A'$.
So $G'$ is not isomorphic to $G$, and by Theorem~\ref{Peeters} and Proposition~\ref{GMrank} the
2-rank of $A'$ equals $2\nu+2$.
\end{proof}

The switching set $S$ given above, is not the only one.
There are many more (indeed, for any three independent vectors $v_1$, $v_2$ and $v_3\in\F2^{2\nu}$,
the set $\{v_1,\ v_2,\ v_3,\ v_1+v_2+v_3\}$ is a Godsil-MvKay switching set whenever the four vertices induce a
regular subgraph.) Therefore we can apply switching several times. However it is not true in general that a second switching increases the 2-rank again,
and it looks difficult to make a general statement like in the above theorem.
Instead we investigated the repeated switching by computer for the case $\nu=3$.

It is also worthwhile to mention that in \cite{HPR1999} upper bounds for the $2$-rank of strongly regular graphs in terms of the eigenvalues are given.
For graphs with the same parameters of $Sp(2\nu,2)$, the 2-rank of its adjacency matrix $A$ is bounded from above by $2^{2\nu-1}-2^{\nu-1}$. But in fact, it can be improved, since the spectrum implies that the matrix
$$E=(2^{2\nu}-1)\left(A+(2^{\nu-1})I\right)-(2^{2\nu-1}+2^{\nu-1})J$$
has real rank equal to $2^{2\nu-1}-2^{\nu-1}-1$, and therefore the 2-rank of $E$ is at most $2^{2\nu-1}-2^{\nu-1}-1$. Since $A\equiv E$ (mod $2$) and the 2-rank of $A$ must be even, it follows that the 2-rank is upper bounded by $2^{2\nu-1}-2^{\nu-1}-2$.

\section{Repeated switching in $Sp(6,2)$}\label{computer}

In this section we show that Godsil-McKay switching generates a significant number of non-isomorphic graphs with
the same parameters as the symplectic graph $Sp(6,2)$.
By computer we search for all switching sets of size $4$ in $Sp(6,2)$.
We switch and compute the 2-rank.
With the firstly encountered graph for which the 2-rank has increased, we repeat the procedure.
We stop if the 2-rank cannot be increased.
By this procedure we obtained 1827 non-isomorphic graphs with the parameters of $Sp(6,2)$.
The possible 2-ranks are: 6, 8, 10, 12, 14, 16 and 18.
No doubt we would have obtained many more non-isomorphic graphs with these parameters if we would have
continued the search for other graphs for which the $2$-rank has increased after switching.
But the isomorphism tests are very time consuming, and since we are mainly interested in the $2$-ranks,
we choose not to do so.
We did, however, continue with some other graphs without worrying about isomorphism in the hope to
find examples with a $2$-rank of $20$ (or more), without success.

We will not display all newly obtained strongly regular graphs, instead we just give the sequence of
switching sets that increases the $2$-rank in each step (vertices are represented as row vectors):
\[
\begin{array}{c}
\{(100000),(010000),(101000),(011000)\},\\
\{(100000),(010000),(100100),(010100)\},\\
\{(100000),(010000),(100010),(010010)\},\\
\{(100000),(010000),(100001),(010001)\},\\
\{(110000),(001000),(000010),(111010)\},\\
\{(110000),(001000),(000001),(111001)\}.
\end{array}
\]
The number of cospectral graphs obtained in each of the six above iterations is: $4275$ with 2-rank $8$
($161$ are non-isomorphic), $2238$ with 2-rank $10$ ($195$ are non-isomorphic), $1242$ with 2-rank $12$
($301$ are non-isomorphic), $818$ with 2-rank $14$ ($489$ are non-isomorphic), $508$ with 2-rank $16$
($508$ are non-isomorphic) and $172$ with 2-rank $18$ ($172$ are non-isomorphic).

Thus we see that there is still a gap between the constructed cases and the theoretic upper bound
for the 2-rank mentioned in Section~\ref{ssg}, which for $Sp(6,2)$ is 26.

\section{Hadamard matrices and $2$-ranks}\label{Hadamard}

We recall some results of Hadamard matrices.
A square $(+1,-1)$-matrix $H$ of order $n$ is a \emph{Hadamard matrix} (or $H$-matrix) whenever $HH^{\t}=nI$.
For example

\begin{equation*}
H=\left[\begin{array}{cccc}
1 & 1 & 1 & 1 \\
1 & 1 & - & - \\
1 & - & 1 & - \\
1 & - & - & 1
\end{array} \right]
\end{equation*}
\noindent is a Hadamard matrix of order $4$ (we write $-$ instead of $-1$).
If a row or a column of a Hadamard matrix is multiplied by $-1$, it remains a Hadamard matrix.
We can multiply rows and columns of any Hadamard matrix by $-1$ such that the first row and column
consist of all ones.
Such a Hadamard matrix is called \emph{normalized}.
A Hadamard matrix $H$ is said to be \emph{graphical} if $H$ is symmetric and it has constant diagonal.
Note that if $H$ is a graphical Hadamard matrix of order $n$ with $\delta$ on the diagonal,
then $A=\frac{1}{2}(J-\delta H)$ is the adjacency matrix of a graph on $n$ vertices.
If $H$ is normalized, the obtained graph has an isolated vertex, and it is well-known that for $n>4$
the graph on the remaining $n-1$ vertices is strongly regular with parameters
$(n-1 , n/2, n/4, n/4)$.
And conversely, any strongly regular graph with the above parameters comes from a graphical Hadamard matrix.
For example, the above Hadamard matrix $H$ is graphical and normalized.
The corresponding graph is the smallest symplectic graph $Sp(2,2)=K_3$ extended with an isolated vertex.
It is well known that if $H_1$ and $H_2$ are Hadamard matrices,
then so is the Kronecker product $H_1\otimes H_2$.
Moreover, if $H_1$ and $H_2$ are normalized, then so is $H_1\otimes H_2$, and if $H_1$ and $H_2$ are graphical,
then so is $H_1\otimes H_2$.
For a Hadamard matrix $H$, we define $A_H = \frac{1}{2}(J-H)$ and $\rho (H)=\mbox{2-rank}(A_{H})$.
\begin{lem}\label{rank}
Let $H_1$ and $H_2$ be two Hadamard matrices, then $\rho (H_1\otimes H_2)\leq \rho (H_1)+\rho (H_2)$,
with equality if $H_1$ and $H_2$ are normalized.
\end{lem}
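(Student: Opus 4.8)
The plan is to reduce the statement to a single identity expressing $A_{H_1\otimes H_2}$ modulo $2$ in terms of $A_{H_1}$ and $A_{H_2}$, and then to read off the $2$-rank. Using $H_i=J-2A_{H_i}$ and the fact that a Kronecker product of all-ones matrices is again an all-ones matrix, expanding $H_1\otimes H_2=(J-2A_{H_1})\otimes(J-2A_{H_2})$ gives
\[
H_1\otimes H_2=J-2\,A_{H_1}\otimes J-2\,J\otimes A_{H_2}+4\,A_{H_1}\otimes A_{H_2},
\]
so that $A_{H_1\otimes H_2}=A_{H_1}\otimes J+J\otimes A_{H_2}-2\,A_{H_1}\otimes A_{H_2}$. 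Writing $M=A_{H_1}\otimes J+J\otimes A_{H_2}$, this shows $A_{H_1\otimes H_2}\equiv M\pmod 2$, and it remains to compute $\mbox{2-rank}(M)$.

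For the inequality: over $\F2$ every row of $A_{H_1}\otimes J$ has the form $v\otimes\1^\t$ with $v$ a row of $A_{H_1}$, so the row space of $A_{H_1}\otimes J$ has dimension at most $\rho(H_1)$; likewise the row space of $J\otimes A_{H_2}$ has dimension at most $\rho(H_2)$. Subadditivity of rank gives $\mbox{2-rank}(M)\le\rho(H_1)+\rho(H_2)$, hence $\rho(H_1\otimes H_2)\le\rho(H_1)+\rho(H_2)$.

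For equality when $H_1$ and $H_2$ are normalized, the key point is that then not only the first row but also the first column of each $H_i$, and hence of each $A_{H_i}$, is zero. Using the zero row of $A_{H_2}$ one checks that every $v\otimes\1^\t$ with $v$ in the row space $V_1$ of $A_{H_1}$ is a row of $M$, and symmetrically every $\1^\t\otimes w$ with $w$ in the row space $V_2$ of $A_{H_2}$ is a row of $M$; hence the row space of $M$ equals $\{v\otimes\1^\t:v\in V_1\}+\{\1^\t\otimes w:w\in V_2\}$. This sum is direct, because a common element $v\otimes\1^\t=\1^\t\otimes w$ must have all its entries equal, forcing $v$ and $w$ to be the same constant vector, i.e. $v=w=\0$ or $v=w=\1$, and $\1\notin V_i$ since the zero first column of $A_{H_i}$ makes every element of $V_i$ have first coordinate $0$. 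Therefore $\mbox{2-rank}(M)=\dim V_1+\dim V_2=\rho(H_1)+\rho(H_2)$.

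The computations are all routine; the one place needing care is the equality part, where one must verify that the zero rows of $A_{H_1}$ and $A_{H_2}$ really do place all the vectors $v\otimes\1^\t$ and $\1^\t\otimes w$ in the row space of $M$. Once that is done, the directness of the sum — which is the actual content of the equality claim — follows immediately from the additional fact that normalization also kills the first column of the $A_{H_i}$.
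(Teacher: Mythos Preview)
Your proof is correct and follows essentially the same route as the paper's: both arguments rest on the identity $A_{H_1\otimes H_2}\equiv A_{H_1}\otimes J+J\otimes A_{H_2}\pmod 2$, derive the inequality from subadditivity of rank, and for the equality case exploit the zero first row/column of $A_{H_i}$ (coming from normalization) to exhibit $\rho(H_1)+\rho(H_2)$ independent vectors in the row/column space. The only cosmetic differences are that you work with rows where the paper works with columns, and you spell out the directness of the sum slightly more explicitly; one small slip in wording (``is a row of $M$'' should be ``lies in the row space of $M$'' for general $v\in V_1$) does not affect the argument.
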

\begin{proof}
It is easily seen that
\[
A_{H_1\otimes H_2} = (J\otimes A_{H_1}) + (A_{H_2}\otimes J) \quad (\text{mod }2).
\]
For any integer matrix $A$ we have
$\mbox{2-rank}(J\otimes A)=\mbox{2-rank}(A\otimes J)=\mbox{2-rank}(A)$.
Therefore $\rho(H_1\otimes H_2)\leq \rho(H_1)+\rho(H_2)$.

To prove the second statement, we define $V_i$ to be a matrix consisting of $\rho(H_i)$ independent columns
of $A_{H_i}$ for $i=1,2$ (so the columns of $V_1$ and $V_2$ form a basis for the column space of $A_{H_1}$
and $A_{H_2}$, respectively).
Suppose $H_1$ and $H_2$ are normalized.
Then $A_{H_1\otimes H_2}$ contains the columns of $\1\otimes V_1$ and $V_2\otimes \1$.
These $\rho(H_1)+\rho(H_2)$ columns are independent (indeed, the first rows of $V_1$ and $V_2$ are all-zero
rows and therefore the only vector in the intersection of the column space of $\1\otimes V_1$ and the column
space of $V_2\otimes \1$ is the zero vector), and hence $\rho(H_1\otimes H_2)=\rho(H_1)+\rho(H_2)$.
\end{proof}
With the Hadamard matrix $H$ of order $4$, given above we define
\[
H^{\otimes\nu}=H\otimes H\otimes \cdots \otimes H \ (\nu\ \mbox{times}).
\]
Then clearly $H^{\otimes\nu}$ is a normalized graphical Hadamard matrix of order $4^{\nu}$,
and Lemma~\ref{rank} implies that 2-rank$(A_{H^{\otimes\nu}})=\rho(H^{\otimes\nu})=2\nu$.
Therefore, by Theorem~\ref{Peeters} the strongly regular graph associated with $H^{\otimes\nu}$ is the
symplectic graph $Sp(2\nu,2)$.

In the definition of $H^{\otimes\nu}$ we can replace any triple product $H\otimes H\otimes H$
by any other regular graphical Hadamard matrix of order $64$.
By choosing Hadamard matrices coming from the strongly regular graphs with various $2$-ranks
found by computer in Section~\ref{computer}, we obtain normalized graphical Hadamard matrices of order $4^\nu$,
and the $2$-rank of the associated strongly regular can take all even values between $2\nu$ and
$2\nu+12\lfloor \nu/3 \rfloor$.
Thus we find:
\begin{thm}\label{z}
For any even $r\in[2\nu,\ 2\nu+12\lfloor \nu/3 \rfloor]$ there exists a strongly regular graph with parameters
$(2^{2\nu}-1,2^{\nu-1},2^{\nu-2},2^{\nu-2})$ and $2$-rank $r$.
\end{thm}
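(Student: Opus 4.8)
The plan is to build the desired strongly regular graph as the graph associated with a suitable normalized graphical Hadamard matrix of order $4^\nu$, obtained as a Kronecker product of smaller normalized graphical Hadamard matrices whose $2$-ranks we can control. The key enabling fact is Lemma~\ref{rank}: if $H_1$ and $H_2$ are normalized Hadamard matrices, then $\rho(H_1\otimes H_2)=\rho(H_1)+\rho(H_2)$, and the product of normalized graphical Hadamard matrices is again normalized and graphical. So I would write $\nu = 3q + s$ with $q=\lfloor\nu/3\rfloor$ and $s\in\{0,1,2\}$, and assemble $H^{\otimes\nu}$ as a Kronecker product of $q$ blocks of the form $H\otimes H\otimes H$ (each of order $64$) together with $s$ extra copies of the order-$4$ matrix $H$.

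The second ingredient is the computer result of Section~\ref{computer}: by repeated Godsil-McKay switching starting from $Sp(6,2)$ we obtained strongly regular graphs with parameters $(63,32,16,16)$ and $2$-rank equal to every even value in $\{6,8,10,12,14,16,18\}$. Each such graph, being a strongly regular graph with parameters $(n-1,n/2,n/4,n/4)$ for $n=64$, arises from a normalized graphical Hadamard matrix of order $64$, and that Hadamard matrix has the same $2$-rank as the graph's adjacency matrix (up to the trivial adjustment coming from the isolated vertex, which does not change the $2$-rank). Thus I would first record that for each even $t\in[6,18]$ there is a normalized graphical Hadamard matrix $H_t$ of order $64$ with $\rho(H_t)=t$; note that $t=6$ is realized by $H\otimes H\otimes H$ itself, corresponding to $Sp(6,2)$.

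Now I would assemble the product: fix $q-1$ of the order-$64$ blocks to be $H\otimes H\otimes H$ (each contributing $6$ to the $2$-rank), let the remaining order-$64$ block be $H_t$ for a chosen even $t\in[6,18]$, and append $s$ copies of $H$ (each contributing $2$). By Lemma~\ref{rank} the resulting normalized graphical Hadamard matrix $\mathcal H$ of order $4^\nu$ has
\[
\rho(\mathcal H)=6(q-1)+t+2s=2\nu+(t-6),
\]
using $6q+2s=2\nu$. As $t$ ranges over the even integers in $[6,18]$, the quantity $t-6$ ranges over the even integers in $[0,12]$, so $\rho(\mathcal H)$ takes every even value in $[2\nu,\,2\nu+12]$. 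To reach all the way up to $2\nu+12\lfloor\nu/3\rfloor = 2\nu+12q$, I would instead let each of the $q$ order-$64$ blocks independently be one of the matrices $H_t$, so the total $2$-rank becomes $2s+\sum_{j=1}^{q} t_j$ with each $t_j\in\{6,8,\dots,18\}$; since $\sum_j t_j$ ranges over all even integers in $[6q,18q]$, the $2$-rank ranges over all even integers in $[6q+2s,\,18q+2s]=[2\nu,\,2\nu+12q]$. Finally, Theorem~\ref{Peeters} gives that the value $r=2\nu$ corresponds exactly to $Sp(2\nu,2)$, and for every other admissible $r$ the associated graph is strongly regular with the stated parameters by the Hadamard-matrix/strongly-regular-graph correspondence recalled in Section~\ref{Hadamard}.

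The main thing to get right is the bookkeeping that $6q+2s=2\nu$ for $\nu=3q+s$ and that the attainable partial sums $\sum t_j$ with $t_j$ even in $[6,18]$ fill out every even integer in $[6q,18q]$ (an easy induction on $q$, since consecutive reachable values differ by $2$), together with the observation that appending copies of the order-$4$ matrix $H$ to a graphical Hadamard matrix keeps it normalized and graphical while adding exactly $2$ to the $2$-rank. Everything else is a direct application of Lemma~\ref{rank}, the $q=1$ computer data, and the Hadamard correspondence; there is no genuine obstacle, only the need to state the decomposition $\nu=3q+s$ cleanly and to point out that $\rho$ of a normalized graphical Hadamard matrix equals the $2$-rank of the adjacency matrix of the associated strongly regular graph.
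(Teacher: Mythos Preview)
Your proposal is correct and follows essentially the same route as the paper: decompose $H^{\otimes\nu}$ into $\lfloor\nu/3\rfloor$ order-$64$ blocks plus leftover copies of $H$, replace each order-$64$ block by a normalized graphical Hadamard matrix of order $64$ coming from the computer-found $(63,32,16,16)$ graphs of $2$-rank $6,8,\dots,18$, and invoke Lemma~\ref{rank} for additivity of $\rho$. The paper states this more tersely in the paragraph preceding the theorem, while you spell out the bookkeeping $\nu=3q+s$, $6q+2s=2\nu$, and the fact that $\sum_j t_j$ hits every even value in $[6q,18q]$; these extra details are all correct and do not constitute a different approach.
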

Another application of Lemma~\ref{rank} is the following.
There exist strongly regular graphs with parameters $(35,18,9,9)$ for the $2$-ranks $6$, $8$, $10$, $12$
and $14$; see~\cite{HPR1999}.
Let $H^*$ be the associated normalized graphical Hadamard matrix of order 36, and let $H$ be as before,
then $H\otimes H^*$ is associated with a strongly regular graph with parameters $(143,72,36,36)$.
By Lemma~\ref{rank} we find that such strongly regular graphs exist for every even $2$-rank
between $8$ and $16$.
\section{Remarks}
A different construction of graphs with the same parameters as $Sp(2\nu,2)$ was given by
Munemasa and Vanhove~\cite{MV2014}.
It would be interesting to know the $2$-rank of their construction.
It is claimed in \cite{MV2014} that the construction admits a cyclic difference set,
and using Corollary 3.7 from \cite{AHPX2002}, it follows that the $2$-rank is a multiple of
$2\nu$, and therefore at least $4\nu$.
So we can conclude that their graphs are not isomorphic to the ones obtained in Theorem~\ref{switch}.
\\
A graph associated with a normalized graphical Hadamard matrix, is a so-called $(v,k,\lambda)$ graph,
which means that the adjacency matrix can be interpreted as the incidence matrix of a symmetric
2-$(v,k,\lambda)$ design.
It is possible that non-isomorphic graphs lead to isomorphic designs.
However, if the matrices have different 2-ranks, then obviously also the designs are non-isomorphic.
Thus we can conclude by Theorem~\ref{z} that there exist at least $1+6\lfloor\nu/3\rfloor$
non-isomorphic symmetric 2-$(2^{2\nu}-1,2^{2\nu-1},2^{2\nu-2})$ designs with distinguishing $2$-ranks.

\subsection*{Acknowledgments}
Research supported by {\em The Netherlands Organization for Scientific Research} $(NWO)$.
The authors would like to thank Ren\'e Peeters and Qing Xiang for helpful discussions.

\end{document}